\newcolumntype{C}[1]{>{\centering\arraybackslash}m{#1}}
\newcolumntype{L}[1]{>{\raggedright\arraybackslash}m{#1}}
\newtheorem{theorem}{Theorem}[section]
\newtheorem{lemma}[theorem]{Lemma}
\newtheorem{proposition}[theorem]{Proposition}
\newtheorem{corollary}[theorem]{Corollary} 
\theoremstyle{definition}
\theoremstyle{remark}
\numberwithin{equation}{section}
\newcommand{\name}{A}
\newcommand{\newname}{B}
\renewcommand{\a}{\mathrm a}
\renewcommand{\b}{\mathrm b} 
\renewcommand{\c}{\mathrm c} 
\newcommand{\ve}{\varepsilon} 
\newcommand{\Ann}[1]{{#1}^0} 
\newcommand{\e}{\mathrm{e}}
\newcommand{\ccdot}{\!\cdot\!}  
\newcommand{\hook}{\lrcorner \,}
\newcommand{\yes}{\checkmark}
\newcommand{\fett}[1]{\textbf{\underline{#1}}}
\newcommand{\nil}[1]{\mathrm{Nil}(#1)}
\newcommand{\bR}{\mathbb{R}}
\renewcommand{\Re}{\mathrm{Re}}
\newcommand{\spa}[1]{\mathrm{span}(#1)}
\newcommand{\n}{\mathfrak{n}}
\newcommand{\g}{\mathfrak{g}} 
\newcommand{\h}{\mathfrak{h}}
\newcommand{\G}{\mathrm{G}}
\newcommand{\GL}{\mathrm{GL}}
\newcommand{\SU}{\mathrm{SU}}
\newcommand{\op}{\oplus} 
\newcommand{\ot}{\otimes}
\newcommand{\ad}{\mathrm{ad}}
\newcommand{\tr}{\mathrm{tr}}
\renewcommand{\o}{\omega}  
\renewcommand{\L}{\Lambda}
\newcommand{\s}{\sigma} 
\renewcommand{\^}{\wedge}
\begin{document}
\title{Half-flat structures on indecomposable Lie groups}
\author{Marco Freibert}
\address{Marco Freibert, Fachbereich Mathematik, Universit\"at
  Hamburg, Bundesstr.~55, 20146 Hamburg, Germany}
\email{freibert@math.uni-hamburg.de}

\author{Fabian Schulte-Hengesbach}
\address{Fabian Schulte-Hengesbach, Fachbereich Mathematik, 
Universit\"at Hamburg, Bundesstr.~55, 20146 Hamburg, Germany}
\email{schulte-hengesbach@math.uni-hamburg.de}

\subjclass[2000]{53C25 (primary), 53C15, 53C30 (secondary)}

\begin{abstract}
  This article can be viewed as a continuation of the articles
  \cite{SH} and \cite{FS} where the decomposable Lie algebras
  admitting half-flat $\SU(3)$-structures are classified. The new main
  result is the classification of the indecomposable six-dimensional
  Lie algebras with five-dimensional nilradical which admit a
  half-flat $\SU(3)$-structure. As an important step of the proof, a
  considerable refinement of the classification of six-dimensional Lie algebras with
  five-dimensional non-Abelian nilradical is established. Additionally, it is proved
  that all non-solvable six-dimensional Lie algebras admit half-flat
  $\SU(3)$-structures.
\end{abstract}

\maketitle

\section{Introduction}
\label{intro}
$\SU(3)$-structures on a real six-manifold $M$ are reductions of the frame bundle of $M$
to $\SU(3)$ and can equivalently be described
as quadruples $(g,J,\o,\Psi)$ consisting of an almost Hermitian structure $(g,J,\o)$
and a unit $(3,0)$-form $\Psi$. Such a structure is called half-flat
if it satisfies
\begin{equation*}
\label{hf}
  d \,\Re \Psi=0\;, \quad d (\o \wedge \o)=0.
\end{equation*}
A left-invariant half-flat $\SU(3)$-structure on a  six-dimensional Lie group can be characterised by a pair
$(\o,\rho)\in \L^2\g^*\times \L^3 \g^*$
of a non-degenerate two-form $\o$ and a three-form $\rho$ of specific type on the associated Lie algebra $\g$
satisfying certain compatibility relations and $d\rho=0=d (\o \wedge \o)$.
We say that such a pair $(\o,\rho)$ is a half-flat $\SU(3)$-structure on the Lie algebra $\g$.

The problem of determining the six-dimensional Lie algebras which admit a half-flat $\SU(3)$-structure
has been solved for the nilpotent case by Conti \cite{C}. In \cite{SH}, one of the authors
has classified direct sums of two three-dimensional Lie algebras which admit
such structures. The remaining decomposable six-dimensional Lie algebras
which admit half-flat $\SU(3)$-structures have been classified by the authors \cite{FS}. 

In this article we continue the work of \cite{FS} and tackle the
problem for indecomposable six-dimensional Lie algebras. Non-solvable
indecomposable six-dimensional Lie algebras have been classified by
Turkowski \cite{Tu}. The resulting list is given in Table
\ref{table_6dnonsolv} also including the only indecomposable
six-dimensional simple Lie algebra.  We obtain the following result by
giving explicit examples in all cases in Table
\ref{examples_6dnonsolv} and by considering the known results in the
decomposable case \cite{SH}, \cite{FS}.
\begin{theorem}\label{Th2}
Let $\g$ be a non-solvable six-dimensional Lie algebra. Then $\g$ admits a half-flat $\SU(3)$-structure.
\end{theorem}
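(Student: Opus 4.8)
The plan is to exploit the finiteness afforded by the classification and to reduce the theorem to a case-by-case construction of explicit structures. Every non-solvable six-dimensional Lie algebra $\g$ is either decomposable or indecomposable. If $\g$ is decomposable, then in its decomposition into indecomposable ideals at least one summand is non-solvable, and whether such a $\g$ admits a half-flat $\SU(3)$-structure is already settled by the classifications of \cite{SH} and \cite{FS}; so I would dispose of the decomposable case entirely by citation. This reduces the statement to the \emph{indecomposable} non-solvable Lie algebras, which by Turkowski \cite{Tu} form the finite list recorded in Table \ref{table_6dnonsolv}, together with the unique indecomposable six-dimensional simple Lie algebra. It then suffices to exhibit, for each algebra on this finite list, one pair $(\o,\rho)\in\L^2\g^*\times\L^3\g^*$ satisfying the half-flat conditions; these examples are to be collected in Table \ref{examples_6dnonsolv}.

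For the actual construction I would fix a coframe $e^1,\dots,e^6$ adapted to the structure equations of each algebra and start from ``standard'' block forms, for instance $\o=e^{12}+e^{34}+e^{56}$ together with $\rho=\Re\big((e^1+\i e^2)\wedge(e^3+\i e^4)\wedge(e^5+\i e^6)\big)$, then permute, rescale and perturb the basis until the two closedness equations $d\rho=0$ and $d(\o\wedge\o)=0$ hold. The feature that keeps this feasible is that the closedness conditions are of low degree in the coefficients of the chosen forms once the structure constants are inserted; in particular $d\rho=0$ is \emph{linear} in the coefficients of $\rho$, so for each algebra it collapses to a small, easily solved system. The compatibility relation $\o\wedge\rho=0$ is likewise a mild polynomial constraint that can be arranged by the same bookkeeping.

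The genuine obstacle is the \emph{open}, nonlinear part of the $\SU(3)$-condition. After securing closedness and compatibility one must still verify that $\o$ is non-degenerate, that $\rho$ is stable with Hitchin's quartic invariant $\la(\rho)<0$ so that $\rho$ induces an almost complex structure $J$, and that the induced symmetric bilinear form $\o(\,\cdot\,,J\,\cdot\,)$ is positive definite; only then is $(\o,\rho)$ a bona fide half-flat $\SU(3)$-structure of the correct type. The difficulty therefore lies not in any single equation but in finding, for each of the finitely many algebras, closed forms that simultaneously land in the open orbit with the right signature, which for the algebras with a large Levi factor or with a scarcity of closed forms demands an algebra-specific choice rather than a uniform ansatz. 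I would accordingly record one representative verification in detail and present the remaining algebras through Table \ref{examples_6dnonsolv}, each reducing to the same finite checklist, and conclude by combining these indecomposable examples with the decomposable results of \cite{SH} and \cite{FS}.
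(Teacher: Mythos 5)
Your proposal follows essentially the same route as the paper: the decomposable non-solvable case is disposed of by citing the classifications of \cite{SH} and \cite{FS}, and the indecomposable case is reduced via Turkowski's list (Table \ref{table_6dnonsolv}, including $\mathfrak{so}(3,1)$) to exhibiting the explicit normalised half-flat structures recorded in Table \ref{examples_6dnonsolv}, whose verification is the finite checklist you describe. The approach and the division of labour between citation and explicit construction coincide with the paper's.
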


In the solvable case, we restrict our attention to indecomposable
six-dimensional Lie algebras with five-dimensional nilradical.  For
the algebras with Abelian nilradical, we can exploit the relation
between half-flat $\SU(3)$-structures and cocalibrated
$\G_2$-structures and solve the existence problem by using a result of
the first author \cite{F}. For the algebras with non-Abelian
nilradical, we have to resort to the classification of the
indecomposable six-dimensional Lie algebras with five-dimensional
nilradical. The slightly cumbersome classification was first
established by Mubarakzyanov \cite{Mu} in 1963 and was recently
corrected by Shabanskaya \cite{Sha}.

In order to prove the main classification result, we need to subdivide
Mubarakzyanov's classes according to the dimensions $\mathrm{h}^k(\g)$
of the Lie algebra cohomology groups and the dimension of the centre
similar as in \cite{FS}.  Although we exclude the Lie algebras with
Abelian nilradical, the number of naturally appearing subclasses turns
out to be larger than we expected, cf.\ Table
\ref{table_6d_5dNR}. Nevertheless, the new data contributes to the
understanding of low-dimensional Lie algebras and may be useful in the
study of further problems concerning six-dimensional Lie algebras. As
a first application, apart from our main result, we classify the
(2,3)-trivial six-dimensional Lie algebras in section
\ref{sec:23trivial}.

Since the Lie algebras with five-dimensional nilradical admitting a half-flat $\SU(3)$-structure have hardly
anything in common, a simple characterisation seems not possible and we have to state our main result in the following form. 
\begin{theorem}\label{Th1}
A solvable indecomposable six-dimensional Lie algebra with five-dimensional nilradical admits a half-flat $\SU(3)$-structure if and
only if it is contained in Table \ref{examples6d_5dNR}.
\end{theorem}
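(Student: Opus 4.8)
The plan is to prove the two implications of the equivalence separately, handling the algebras with Abelian and with non-Abelian nilradical by different methods.

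For the ``if'' direction I would simply verify that each Lie algebra listed in Table \ref{examples6d_5dNR} carries the half-flat structure recorded alongside it. Concretely, for a given pair $(\o,\rho)\in\L^2\g^*\times\L^3\g^*$ one checks that $\o$ is non-degenerate, that $\rho$ is stable with $\la(\rho)<0$ so that it induces an almost complex structure $J_\rho$, that the compatibility relation $\o\wedge\rho=0$ and the correct volume normalisation hold, and finally the two closedness conditions $d\rho=0$ and $d(\o\wedge\o)=0$. Since the structure equations of each algebra are explicit, this is a finite and purely computational check.

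The substance of the theorem is the ``only if'' direction: every algebra absent from the table admits no half-flat structure. First I would dispose of the algebras with Abelian nilradical. Here I would invoke the equivalence between half-flat $\SU(3)$-structures in dimension six and cocalibrated $\G_2$-structures on a one-dimensional extension, so that the existence question is answered directly by the result of \cite{F}; comparing that result with Table \ref{examples6d_5dNR} settles this subcase. The remaining, and hardest, subcase is the non-Abelian nilradical, which I would attack algebra by algebra using the refined version of Mubarakzyanov's classification established earlier in the paper, cf.\ Table \ref{table_6d_5dNR}. The refinement by the cohomology dimensions $\mathrm{h}^k(\g)$ and the dimension of the centre is exactly what makes the case analysis feasible: these are isomorphism invariants, so an obstruction found for one algebra rules out its whole class, and the dimensions of the cocycle spaces $Z^k(\g)$ directly bound the supply of closed forms available to build a structure.

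The key obstruction, and where I expect the real work to lie, is the interplay between the closedness and the non-degeneracy/type conditions. A half-flat structure requires a closed stable three-form $\rho\in Z^3(\g)$ with $\la(\rho)<0$ together with a two-form $\o$ satisfying $d(\o\wedge\o)=0$, $\o\wedge\rho=0$ and the volume condition. For each algebra to be excluded I would parametrise the generic closed three-form, compute the quartic stability invariant $\la(\rho)$, and show either that $\la(\rho)\ge 0$ for every closed $\rho$ --- so that no compatible almost complex structure exists --- or that any $\o$ forced by the compatibility and half-flatness constraints degenerates. Organising these computations by the invariants of the refinement keeps the number of genuinely distinct cases manageable, but controlling the sign of $\la(\rho)$ over the full space of closed three-forms in each remaining class is the main obstacle.
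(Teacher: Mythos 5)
Your overall architecture (verify the tabulated examples for the ``if'' direction; dispose of the Abelian-nilradical case via the cocalibrated $\G_2$ correspondence and the result of \cite{F}; then work case by case through the refined classification of Table \ref{table_6d_5dNR}) matches the paper. The genuine gap is in the obstruction you propose for the non-Abelian nilradical cases. Showing that $\lambda(\rho)\ge 0$ for every closed three-form $\rho\in Z^3(\g)$ is far too weak: most of the algebras to be excluded admit plenty of closed stable three-forms with $\lambda(\rho)<0$, and your fallback --- that ``any $\o$ forced by the constraints degenerates'' --- is neither a checkable criterion nor the actual failure mode. What fails is positive definiteness of the induced bilinear form $g=\o(J_\rho\,\cdot,\cdot)$. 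The paper's workhorse, Proposition \ref{obst_algebra1}, captures exactly this: one exhibits a single one-form $\alpha$ (in practice $\alpha=e^6$) with $\alpha\wedge\tilde J_\rho^*\alpha\wedge\s=0$ for \emph{all} closed three-forms $\rho$ and all closed four-forms $\s$; since $\s=\frac{1}{2}\o^2$ is closed for any half-flat structure, this forces $\alpha$ to be isotropic for $g$, a contradiction. Note that this obstruction couples the closed three-forms to the closed four-forms, i.e.\ to $\o^2$, which your scheme never does.

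Moreover, even that refined obstruction does not suffice for six algebras ($\name_{6,39}^{-1,-1}$, $\name_{6,41}^{-1}$, $\name_{6,76}^{-1}$, $\name_{6,78}$, $\newname_{6,3}$ and $\newname_{6,4}^0$), for which the paper deploys a second, genuinely different tool (Proposition \ref{obst_algebra2}, based on Lemma \ref{le:obst}): the pair $(\rho,\s)\in Z^3(\g)\times Z^4(\g)$ is lifted to the four-form $\Omega(\rho,\s)=\rho\wedge e^7+\s$ on $\g\op\bR$, which for a genuine half-flat structure would be the Hodge dual of a cocalibrated $\G_2$-form; contracting with a fixed vector and restricting to a complement must then yield a three-form with negative $\lambda$, and one checks that the resulting $\lambda$ is a perfect square, hence non-negative. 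Your proposal contains neither of these mechanisms, so the case-by-case exclusion could not be completed as described; the part of your plan that does survive is the existence check and the treatment of the Abelian nilradical (where indeed \emph{every} such algebra is excluded, by the parity constraint on Jordan blocks from \cite{F}).
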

\noindent Note that Table \ref{examples6d_5dNR} also contains an explicit example for each case.

By a result of Mubarakzyanov \cite{Mu}, a solvable six-dimensional Lie
algebra which is neither nilpotent nor decomposable has four- or
five-dimensional nilradical. Hence, the classification remains open
only for the case of solvable indecomposable six-dimensional Lie
algebras with four-dimensional nilradical.

This work was supported by the German Science Foundation (DFG) within
the Collaborative Research Centre 676 "Particles, Strings and the
Early Universe". The authors also thank the university of Hamburg for
financial support, Vicente Cort\'es for suggesting the project and
for pointing out the application concerning (2,3)-trivial Lie
algebras and Anna Fino for pointing out the new classification \cite{Sha} of six-dimensional
Lie algebras with five-dimensional nilradical.

\section{Obstruction theory for half-flat $\SU(3)$-structures}
\label{sec:obst}
\subsection{Known obstructions}
A half-flat $\SU(3)$-structure on a Lie algebra $\g$ can be described
within the framework of stable forms on real vector spaces developed by Hitchin \cite{Hi1} and thoroughly discussed 
e.g. in \cite{CLSS}. 

A \emph{stable} $k$-form $\rho\in \L^k V^*$ on a vector space $V$ is a $k$-form which lies in 
an open $\GL(V)$ orbit for the natural action of $\GL(V)$ on $V$. A two-form in even dimensions is stable if and only 
if it is non-degenerate. To characterise stability of three-forms $\rho\in \L^3 V^*$ on an oriented six-dimensional vector 
space $V$, Hitchin \cite{Hi2} introduced
\begin{equation*}
K_{\rho}(v):=\kappa\left((v\hook\rho)\wedge \rho \right)\in V\ot \Lambda^6 V^*,\quad \lambda(\rho):=\frac{1}{6} \tr K_{\rho}^2 \in \left(\Lambda^6 V^*\right)^{\ot 2},\quad J_{\rho}:= \frac{1}{\sqrt{|\lambda(\rho)|}} K_{\rho}\in V\ot V^*,
\end{equation*}
where $\kappa$ denotes the natural isomorphism $\Lambda^5 V^* \cong V \otimes \Lambda^6 V^*$ and $\sqrt{|\lambda(\rho)|}\in \Lambda^6 V^*$ denotes the positively oriented root of $|\lambda(\rho)|\in \left(\Lambda^6 V^*\right)^{\ot 2}$. A three-form $\rho\in \L^3 \g^*$ in dimension six is stable if and only if $\lambda(\rho)\neq 0$ and the induced endomorphism $J_{\rho}$ of $V$ is a complex structure on $V$ if and only if $\lambda(\rho)<0$ .

A \emph{half-flat $\SU(3)$-structure} on a six-dimensional Lie algebra $\g$ is a pair of stable forms $(\omega,\rho)\in \L^2 \g^*\times \L^3 \g^*$ such that $\lambda(\rho)<0$, $\omega\wedge \rho=0$ and $d\o^2=0=d\rho$ and such that $g(\cdot,\cdot):=\omega(J_{\rho}\cdot,\cdot)$ is a Euclidean metric. A half-flat $\SU(3)$-structure $(\omega,\rho)\in \L^2 \g^*\times \L^3 \g^*$ is called \emph{normalised} if $J_\rho^* \rho \wedge \rho=\frac{2}{3}\, \omega^3$.

In \cite[Corollary 3.2]{FS} we already developed an obstruction to the existence of a half-flat $\SU(3)$-structure
which is a refinement of the obstructions developed by Conti \cite{C}. We state it here again denoting by
$Z^p(\g)$ the space of all closed $p$-forms on $\g$.
\begin{proposition}
  \label{obst_algebra1}
  Let $\g$ be a six-dimensional Lie algebra with a volume form $\nu
  \in \L^6\g^*$. If there is a non-zero one-form $\alpha \in\g^*$
  satisfying
  \begin{equation}
  \label{tildeJ:1}
    \alpha \^ \tilde J^*_\rho \alpha \^ \s=0
  \end{equation} 
  for all $\rho \in Z^3(\g)$ and all $\s \in Z^4(\g)$,
  where $\tilde J_\rho^*\alpha $ is defined for $X \in \g$ by 
   \begin{eqnarray}
  \label{tildeJ:2}
  \tilde J_{\rho}^* \alpha (X) \,\nu &=& 
  \alpha \wedge (X \hook \rho) \wedge \rho,
  \end{eqnarray}
  then $\g$ does not admit a half-flat
  $\SU(3)$-structure.
\end{proposition}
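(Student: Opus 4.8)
The plan is to argue by contraposition: assuming that $\g$ \emph{does} admit a half-flat $\SU(3)$-structure $(\o,\rho)$, I will produce a closed three-form and a closed four-form for which the expression in \eqref{tildeJ:1} fails to vanish, contradicting the hypothesis. The half-flat conditions guarantee exactly that $\rho\in Z^3(\g)$ and $\o^2\in Z^4(\g)$, so these are precisely the admissible test forms to feed into \eqref{tildeJ:1}.

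The first and most delicate step is to relate the auxiliary map $\tilde J_\rho^*$ to the genuine complex structure $J_\rho$. Unwinding the defining property of $\kappa$, namely $\alpha\wedge\beta=\langle\alpha,\kappa(\beta)\rangle$ for $\beta\in\L^5\g^*$, the right-hand side of \eqref{tildeJ:2} is exactly $\langle\alpha,K_\rho(X)\rangle$. Since $K_\rho=\sqrt{|\la(\rho)|}\,J_\rho$ by definition and $\sqrt{|\la(\rho)|}\in\L^6\g^*$ is a positive multiple $c\,\nu$ of the chosen volume form, this yields $\tilde J_\rho^*\alpha=c\,J_\rho^*\alpha$ with $c>0$; that is, $\tilde J_\rho^*$ is simply a positive rescaling of the honest dual complex structure $\alpha\mapsto\alpha\circ J_\rho$ on $\g^*$.

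Substituting $\s=\o^2$ and the structure's own $\rho$ into the hypothesis then forces $\alpha\wedge J_\rho^*\alpha\wedge\o^2=0$ for the given non-zero $\alpha$. The final step is to show this is impossible. Because $g(\cdot,\cdot)=\o(J_\rho\cdot,\cdot)$ is positive definite, $(g,J_\rho,\o)$ is a genuine almost Hermitian structure, so one may choose a unitary basis $e_1,\dots,e_6$, adapted to $\alpha$ by taking $e_1$ proportional to the $g$-dual of $\alpha$, in which $\o=e^{12}+e^{34}+e^{56}$ and $\alpha=e^1$ after normalisation. Then $J_\rho^*\alpha=\pm e^2$, so $\alpha\wedge J_\rho^*\alpha=\pm e^{12}$, and expanding $\o^2=2(e^{1234}+e^{1256}+e^{3456})$ gives $\alpha\wedge J_\rho^*\alpha\wedge\o^2=\pm2\,e^{123456}\neq0$. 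This contradiction completes the argument.

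The only real obstacle I anticipate is the bookkeeping in the second step: correctly tracking the $\L^6\g^*$-valued normalisation in the passage from $K_\rho$ to $J_\rho$ and confirming that the scalar $c$ is non-zero, which is exactly where the condition $\la(\rho)<0$ enters, guaranteeing that $J_\rho$ is an actual complex structure rather than a degenerate endomorphism. Once the identity $\tilde J_\rho^*\alpha=c\,J_\rho^*\alpha$ is established, everything reduces to the elementary non-vanishing computation above, which is robust and entirely independent of the particular Lie algebra under consideration.
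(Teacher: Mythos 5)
Your argument is correct and is essentially the standard one: the paper itself states Proposition \ref{obst_algebra1} without proof, citing \cite[Corollary 3.2]{FS}, and the proof there proceeds exactly as you do, identifying $\tilde J_\rho^*\alpha$ with a non-zero multiple of $J_\rho^*\alpha$ via $K_\rho=\sqrt{|\lambda(\rho)|}\,J_\rho$ and then checking that $\alpha\wedge J_\rho^*\alpha\wedge\omega^2$ is a non-zero multiple of the volume form for any almost Hermitian structure. (Only cosmetic caveat: the scalar $c$ with $\sqrt{|\lambda(\rho)|}=c\,\nu$ need only be non-zero, not positive, since $\nu$ was not assumed positively oriented; this does not affect the conclusion.)
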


\subsection{Obstructions from the relation to cocalibrated $\G_2$-structures}\label{subsec:G2}
The obstruction obtained above is not the only tool we need for the proof of Theorem \ref{Th1}.
Additionally, we exploit the relation between half-flat $\SU(3)$-structures
on six-dimensional Lie algebras $\g$ and cocalibrated $\G_2$-structures on seven-dimensional Lie algebras
$\g\op \bR$. We roughly sketch this relation which is discussed in detail e.g. in \cite{CLSS} and \cite{S}.

Let $(\o,\rho)\in \L^2\g^*\times \L^3 \g^*$ be a 
half-flat $\SU(3)$-structure on a six-dimensional Lie algebra $\g$. Denote by
$J_{\rho}$ the complex structure on $\g$ induced by $\rho$. Choose some
non-zero element $\alpha\in \Ann{\g}\backslash\{0\}$ in the annihilator $\Ann{\g}$ of $\g$ in $\h:=\g\op \bR$.
Then
\begin{equation*}
\varphi:=\o\wedge \alpha +J_{\rho}^*\rho
\end{equation*}
is a \emph{$\G_2$-structure} on $\h:=\g\op \bR$, i.e. the stabiliser of
$\varphi$ under the natural action of $\GL(\h)$ on $\L^3 \h^*$
is isomorphic to $\G_2$. Since $\G_2\subseteq \mathrm{SO}(7)$, the $\G_2$-structure $\varphi$ induces a Euclidean
metric $g_{\varphi}$ and an orientation on $\h$ such that the subspaces $\g$ and $\bR$ are orthogonal with
respect to $g_{\varphi}$. Hence, it induces a Hodge star operator $\star_{\varphi}:\L^3 \h^*\rightarrow \L^4 \h^*$.
A straightforward computation shows that the Hodge dual $\star_{\varphi}\varphi$
of $\varphi$ is given by
\begin{equation}\label{eq:Hodgedual}
\star_{\varphi}\varphi=\rho\wedge \alpha+\frac{1}{2}\o^2.
\end{equation}
Since the forms $\rho$, $\frac{1}{2}\o^2$ and $\alpha\in \g^0$ are all closed, the four-form $\star_{\varphi} \varphi$
is closed, as well. A $\G_2$-structure $\varphi$ with closed Hodge dual $\star_{\varphi}\varphi$ is called \emph{cocalibrated}.

Thus, each half-flat $\SU(3)$-structure on a six-dimensional Lie algebra induces a cocalibrated $\G_2$-structure
on the seven-dimensional Lie algebra $\h=\g\op \bR$.
\begin{proposition}\label{Abeliannilradical}
Let $\g$ be an indecomposable six-dimensional Lie algebra and
assume that the nilradical $\nil \g$ of $\mathfrak{g}$ is isomorphic
to $\bR^5$. Then $\g$ does not admit a half-flat $\SU(3)$-structure.
\end{proposition}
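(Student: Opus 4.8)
The plan is to reduce the statement to a non-existence result for cocalibrated $\G_2$-structures on the almost abelian Lie algebra $\h=\g\op\bR$ and to read off the obstruction from the Jordan data of the relevant derivation. Since $\nil{\g}\cong\bR^5$ has codimension one in $\g$, I would first choose $e_0\in\g\setminus\nil{\g}$ and write $\g=\bR e_0\ltimes_A\bR^5$ with $A:=\ad(e_0)|_{\nil{\g}}\in\mathfrak{gl}(5,\bR)$; here $A$ is non-nilpotent, as otherwise $e_0$ would lie in $\nil{\g}$. Passing to $\h=\g\op\bR$, the nilradical becomes the abelian ideal $\bR^6=\bR^5\op\bR$, so that $\h=\bR e_0\ltimes_D\bR^6$ is again almost abelian with $D=A\op 0$. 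By the construction in Section~\ref{subsec:G2}, a half-flat $\SU(3)$-structure on $\g$ induces a cocalibrated $\G_2$-structure on $\h$ via \eqref{eq:Hodgedual}; hence it suffices to prove that $\h$ admits no cocalibrated $\G_2$-structure.

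For the second step I would invoke the classification of cocalibrated $\G_2$-structures on seven-dimensional almost abelian Lie algebras from \cite{F}, which in the present setting I would re-derive as follows. Writing $\mathfrak u=\bR^6$ for the abelian ideal, the structure equations read $de^0=0$ and $d\eta=-e^0\wedge(D\cdot\eta)$ for $\eta\in\L^\bullet\mathfrak u^*$, where $D\cdot$ denotes the induced derivation on forms. Any $\G_2$-structure has Hodge dual of the form $\star\varphi=\tfrac12\,\omega^2+e^0\wedge\tau$, where $\omega\in\L^2\mathfrak u^*$ is the induced positive symplectic form on $\mathfrak u$; thus $d\star\varphi=-e^0\wedge\omega\wedge(D\cdot\omega)$. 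By the Lefschetz isomorphism $\L^2\mathfrak u^*\to\L^4\mathfrak u^*$, $\beta\mapsto\omega\wedge\beta$, on the six-dimensional symplectic space $(\mathfrak u,\omega)$, this vanishes precisely when $D\cdot\omega=0$. Since every symplectic form on $\bR^6$ is compatible with a positive definite metric, the upshot is that $\h$ carries a cocalibrated $\G_2$-structure if and only if $D$ preserves some symplectic form, i.e.\ if and only if $D$ is conjugate in $\GL(6,\bR)$ to an element of $\mathfrak{sp}(6,\bR)$.

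The final step is to show that $D=A\op0$ is never conjugate into $\mathfrak{sp}(6,\bR)$ under the indecomposability hypothesis. I would use that any element of $\mathfrak{sp}(6,\bR)$ has even-dimensional generalised $0$-eigenspace on which it acts as a nilpotent element of the corresponding symplectic algebra, and that such nilpotent elements have Jordan partition in which every odd part occurs with even multiplicity. On the other hand, $\g=\bR e_0\ltimes_A\bR^5$ is indecomposable if and only if $A$ has no $1\times1$ Jordan block for the eigenvalue $0$: a size-one $0$-block would yield a central line admitting an $A$-invariant complement and hence split off an $\bR$-summand, while in all other cases $e_0$ cannot be separated from $\nil{\g}$. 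Consequently $A$ contributes no part $1$ to the $0$-eigenspace, whereas $\op0$ contributes exactly one; the partition of the $0$-eigenspace of $D$ therefore contains the part $1$ with multiplicity exactly one, which is odd. This contradicts the symplectic condition, so $D$ is not conjugate into $\mathfrak{sp}(6,\bR)$, whence $\h$ admits no cocalibrated $\G_2$-structure, and by contraposition $\g$ admits no half-flat $\SU(3)$-structure.

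The main obstacle I anticipate is the two-sided bookkeeping at the eigenvalue $0$: correctly characterising indecomposability through the absence of a size-one $0$-block of $A$, and simultaneously extracting from \cite{F} the precise symplectic Jordan-type obstruction, checking that no positivity constraint from the $\SU(3)$-reduction has been lost and that the real-form subtleties of $\mathfrak{sp}(6,\bR)$-conjugacy do not disturb the parity count. Once these are pinned down, the decisive observation is simply that the extra summand $\op0$ forces the multiplicity of the part $1$ to be odd, which yields a single uniform obstruction valid across all Mubarakzyanov classes with abelian nilradical.
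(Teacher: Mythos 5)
Your proposal is correct and follows essentially the same route as the paper: pass to $\h=\g\op\bR$ with the Abelian codimension-one ideal $\mathfrak u=\nil{\g}\op\bR$, and derive the contradiction from the fact that indecomposability forces exactly one (hence an odd number of) size-one Jordan blocks with eigenvalue $0$ in $\ad(e_0)|_{\mathfrak u}=A\op 0$, violating the parity obstruction for cocalibrated $\G_2$-structures. The only difference is that where the paper simply cites \cite[Theorem 1.1]{F}, you re-derive the needed necessary condition via the equivalence with $D$ being conjugate into $\mathfrak{sp}(6,\bR)$ and the classification of nilpotent symplectic Jordan types; this derivation is sound.
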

\begin{proof}
Let $\g$ be an indecomposable six-dimensional Lie algebra with five-dimensional Abelian nilradical. Assume that $\g$ admits a
half-flat $\SU(3)$-structure. Then the seven-dimensional Lie algebra $\h:=\g\op \bR$ has to admit a cocalibrated $\G_2$-structure. 
In \cite{F}, the seven-dimensional Lie algebras $\h$ with Abelian codimension one ideals which admit cocalibrated $\G_2$-structures
have been classified. Note that $\mathfrak{u}:=\nil \g\op \bR$ is an Abelian ideal of codimension one in $\h$ which allows us to apply the results of \cite{F} as follows. The indecomposability of $\g$ implies that the complex Jordan normal form of $\ad(e_7)|_{\mathfrak{u}}$ for some $e_7\in \h \backslash \mathfrak{u}$ has to contain exactly one Jordan block of size one with $0$ on the diagonal. However, this statement contradicts \cite[Theorem 1.1]{F} stating that in the complex Jordan normal form of $\ad(e_7)|_{\mathfrak{u}}$ the number of Jordan blocks of size one with zero on the diagonal has to be even.
\end{proof}
In the proof of the main result, we use another obstruction obtained by the relation between half-flat $\SU(3)$-structures and cocalibrated $\G_2$-structures. The following result on $\G_2$-structures can be found in \cite[Lemma 3.9, Remark 3.10]{F}.
\begin{lemma}\label{le:obst}
Let $V$ be a seven-dimensional vector space and $\varphi\in \L^3 V^*$ be a $\G_2$-structure on $V$. Let $v\in V$, $v\neq 0$ be arbitrary and $U$ be an arbitrary complement of $\spa{v}$ in $V$. Then the three-form
$\tilde{\rho}:=(v\hook\star_{\varphi} \varphi)|_{U}\in \L^3 U^*$ is a stable three-form on $U$ with $\lambda(\tilde{\rho})<0$.
\end{lemma}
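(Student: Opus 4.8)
The plan is to reduce the statement to a single computation in a standard $\G_2$-adapted basis, after first eliminating the two sources of apparent generality in the statement, namely the scaling of $v$ and the choice of complement $U$. Write $\psi:=\star_{\varphi}\varphi\in\L^4 V^*$ and set $\eta:=v\hook\psi\in\L^3 V^*$, so that $\tilde\rho=\eta|_U$. Since $v\hook(v\hook\psi)=0$, the three-form $\eta$ is horizontal, i.e.\ it is the pullback of a well-defined three-form $\bar\eta$ on the quotient $V/\spa{v}$ under the canonical projection $\pi$. For any complement $U$ of $\spa{v}$ the restriction $\pi|_U\colon U\to V/\spa{v}$ is a linear isomorphism and $\tilde\rho=(\pi|_U)^*\bar\eta$; hence for two different complements the resulting three-forms differ by the $\GL$-isomorphism $(\pi|_{U'})^{-1}\circ\pi|_U$, so that stability of $\tilde\rho$ and the sign of $\lambda(\tilde\rho)$ are independent of the choice of $U$. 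Moreover, replacing $v$ by $cv$ with $c\neq 0$ rescales $\tilde\rho$ by $c$ and hence $\lambda(\tilde\rho)$ by $c^4>0$, so neither stability nor the sign of $\lambda$ changes. I may therefore assume from now on that $v$ is a unit vector for $g_{\varphi}$ and that $U=v^{\perp}$.

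Next I would use that $\G_2=\mathrm{Stab}(\varphi)\subseteq\mathrm{SO}(7)$ acts transitively on the unit sphere of $(V,g_{\varphi})$. This yields a $\varphi$-adapted orthonormal basis $e_1,\dots,e_7$ in which $v=e_7$ and $\varphi$ takes its normal form. Setting $W:=\spa{e_1,\dots,e_6}=v^{\perp}$ and $\alpha:=g_{\varphi}(e_7,\cdot)=e^7$, this normal form is precisely the one underlying \eqref{eq:Hodgedual}: there are a stable two-form $\o\in\L^2 W^*$ and a stable three-form $\rho\in\L^3 W^*$ defining the standard $\SU(3)$-structure on $W$ such that $\varphi=\o\^\alpha+J_{\rho}^*\rho$ and therefore
\begin{equation*}
\psi=\star_{\varphi}\varphi=\rho\^\alpha+\tfrac12\,\o^2 .
\end{equation*}
This identity is the pointwise linear-algebra statement behind \eqref{eq:Hodgedual} and is verified once and for all in the normal form.

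Finally I would contract. Because $\o$ and $\rho$ lie in $\L^{\bullet}W^*$ while $e_7\hook\alpha=1$ and $e_7\hook\o=e_7\hook\rho=0$, a direct computation gives
\begin{equation*}
\eta=e_7\hook\psi=e_7\hook\bigl(\rho\^\alpha+\tfrac12\o^2\bigr)=-\rho ,
\end{equation*}
so that $\tilde\rho=\eta|_{W}=-\rho$. Since $\rho$ is the stable three-form of the standard $\SU(3)$-structure on the six-dimensional space $W$, it induces a complex structure $J_{\rho}$ and hence satisfies $\lambda(\rho)<0$. As $\lambda$ is homogeneous of degree four in its argument, $\lambda(\tilde\rho)=\lambda(-\rho)=\lambda(\rho)<0$, and $\tilde\rho$ is stable. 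The main obstacle is the first paragraph: the reduction to the orthogonal complement rests on the horizontality $v\hook\eta=0$ and the ensuing descent to $V/\spa{v}$, which is exactly what makes the conclusion independent of the \emph{arbitrary} complement $U$. Once this is established, what remains is the standard-model verification of the decomposition of $\star_{\varphi}\varphi$ and the one-line contraction above.
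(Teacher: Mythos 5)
The paper does not actually prove this lemma: it is quoted verbatim from \cite[Lemma 3.9, Remark 3.10]{F}, so there is no in-paper argument to compare against. Your proof is, however, correct and self-contained, and it follows the route one would expect. The two reduction steps are sound: $v\hook(v\hook\star_\varphi\varphi)=0$ does imply that $v\hook\star_\varphi\varphi$ lies in $\L^3$ of the annihilator of $v$ and hence descends to $V/\spa{v}$, so the restrictions to any two complements differ by a linear isomorphism, under which both stability and the sign of $\lambda$ (an element of the line $(\L^6 U^*)^{\ot 2}$ with its canonical positive ray) are invariant; and rescaling $v$ multiplies $\lambda(\tilde\rho)$ by $c^4>0$. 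The transitivity of $\G_2$ on the unit sphere of $(V,g_\varphi)$ with stabiliser $\SU(3)$ then reduces everything to the normal form, where the decomposition $\star_\varphi\varphi=\rho\^\alpha+\tfrac12\o^2$ is exactly the pointwise content of equation \eqref{eq:Hodgedual} of the paper, and the contraction $e_7\hook(\rho\^ e^7+\tfrac12\o^2)=-\rho$ together with $\lambda(-\rho)=\lambda(\rho)<0$ finishes the argument. The one fact you lean on without proof is the transitivity of $\G_2$ on $S^6$; that is standard and acceptable to cite, but it is the genuinely nontrivial input that replaces whatever direct computation \cite{F} performs.
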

\noindent As a consequence, we can prove the following obstruction condition.
\begin{proposition}
\label{obst_algebra2}
Let $\g$ be a six-dimensional Lie algebra and set $\h:=\g\op \bR$. Choose a non-zero one-form
$\alpha\in \Ann{\g}$ in the annihilator $\Ann{\g}$ of $\g$ in $\h$. For each pair $(\rho,\s)\in Z^3(\g)\times Z^4(\g)$
of a closed four-form and a closed three-form on $\g$ we define a four-form $\Omega(\rho,\s)\in \L^4 \h^*$ on $\h$
as follows:
\begin{equation*}
\Omega(\rho,\s):=\rho\wedge \alpha+\s.
\end{equation*}
If there exists a non-zero element $X\in \h$ and a complement $W$ of $\spa{X}$ in $\h$ such that for all pairs $(\rho,\s)\in Z^3(\g)\times Z^4(\g)$ the three-form $\tilde{\rho}(\rho,\s):= \left(X\hook\Omega(\rho,\s)\right)|_{W}\in \L^3 W^*$ on $W$ fulfils $\lambda(\tilde{\rho})\geq 0$, then $\g$ does not admit any half-flat $\SU(3)$-structure.
\end{proposition}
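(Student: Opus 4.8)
The plan is to argue by contraposition, reducing the existence of a half-flat $\SU(3)$-structure on $\g$ to a stability statement on $\h=\g\op\bR$ to which Lemma \ref{le:obst} applies. First I would assume, for contradiction, that $\g$ admits a half-flat $\SU(3)$-structure $(\o,\rho_0)\in \L^2\g^*\times \L^3\g^*$. As recalled in Subsection \ref{subsec:G2}, this produces a cocalibrated $\G_2$-structure $\varphi:=\o\wedge\alpha+J_{\rho_0}^*\rho_0$ on $\h$, whose Hodge dual is given by \eqref{eq:Hodgedual} as $\star_\varphi\varphi=\rho_0\wedge\alpha+\tfrac12\o^2$. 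The key observation is that this Hodge dual is exactly of the form $\Omega(\rho,\s)$ appearing in the statement: the form $\rho_0$ is a closed three-form on $\g$ (since $d\rho_0=0$), and $\s:=\tfrac12\o^2$ is a closed four-form on $\g$ (since $d\o^2=0$). Hence $\star_\varphi\varphi=\Omega(\rho_0,\tfrac12\o^2)$ with $(\rho_0,\tfrac12\o^2)\in Z^3(\g)\times Z^4(\g)$.

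Next I would bring in Lemma \ref{le:obst}. Given the hypothesis of the proposition, fix the non-zero $X\in\h$ and the complement $W$ of $\spa{X}$ in $\h$ for which the stated $\lambda$-inequality holds. Applying Lemma \ref{le:obst} to the $\G_2$-structure $\varphi$ with the choices $v:=X$ and $U:=W$, we conclude that the three-form
\begin{equation*}
\tilde\rho:=(X\hook\star_\varphi\varphi)|_W\in\L^3 W^*
\end{equation*}
is stable with $\lambda(\tilde\rho)<0$. On the other hand, substituting $\star_\varphi\varphi=\Omega(\rho_0,\tfrac12\o^2)$ shows that this form coincides with $\tilde\rho(\rho_0,\tfrac12\o^2)=(X\hook\Omega(\rho_0,\tfrac12\o^2))|_W$, which by hypothesis satisfies $\lambda(\tilde\rho(\rho_0,\tfrac12\o^2))\geq 0$. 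These two statements are contradictory, so no half-flat $\SU(3)$-structure on $\g$ can exist.

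The conceptual content is therefore just the identification of $\star_\varphi\varphi$ with a member of the explicitly parametrised family $\{\Omega(\rho,\s)\}$, after which Lemma \ref{le:obst} does all the work. The main point requiring care is that the hypothesis of the proposition must hold uniformly over \emph{all} pairs $(\rho,\s)\in Z^3(\g)\times Z^4(\g)$, not merely for the pair coming from a hypothetical structure; since a priori we do not know the structure exists, we cannot single out its associated $(\rho_0,\tfrac12\o^2)$ in advance, and the quantifier over all closed forms is precisely what makes the condition checkable without reference to any structure. Once the universal hypothesis is in place, the specialisation to $(\rho_0,\tfrac12\o^2)$ is immediate, and the only remaining subtlety is the routine verification that $\tfrac12\o^2\in Z^4(\g)$ and $\rho_0\in Z^3(\g)$, which follow directly from the defining closedness conditions $d\o^2=0=d\rho$ of a half-flat $\SU(3)$-structure.
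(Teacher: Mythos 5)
Your proof is correct and follows essentially the same route as the paper's: identify the Hodge dual $\star_\varphi\varphi$ of the induced cocalibrated $\G_2$-structure with $\Omega(\rho_0,\tfrac12\o^2)$ via equation \eqref{eq:Hodgedual}, note that $(\rho_0,\tfrac12\o^2)\in Z^3(\g)\times Z^4(\g)$ by the half-flat closedness conditions, and derive a contradiction from Lemma \ref{le:obst}. No gaps.
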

\begin{proof}
Let $\g$, $\h$, $\alpha\in \Ann{\g}$ as in the statement. Assume that $X\in \h$ and $W\subseteq \h$ as in the statement exist and that, nevertheless, $\g$ admits a half-flat $\SU(3)$-structure $(\omega,\rho)\in \L^2 \g^*\times \L^3 \g^*$. Set $\sigma:=\frac{1}{2}\o^2$. Then $(\rho,\s)\in Z^3(\g)\times Z^4(\g)$. As aforementioned, the half-flat $\SU(3)$-structure $(\omega,\rho)$ induces a cocalibrated $\G_2$-structure $\varphi$ on $\h$ and by equation (\ref{eq:Hodgedual}), the Hodge dual is given by
\begin{equation*}
\star_{\varphi} \varphi=\rho\wedge \alpha+\s=\Omega(\rho,\s).
\end{equation*}
Applying Lemma \ref{le:obst} for $0\neq X\in \h$ and the complement $W$ of $\spa{X}$ in $\h$ yields that the three-form
$\tilde{\rho}(\rho,\s)=(X\hook \star_{\varphi} \varphi)|_W=(X\hook\Omega(\rho,\s))|_W\in \L^3 W^*$ on $W$ has to fulfil $\lambda(\tilde{\rho})<0$, a contradiction.
\end{proof}
\section{Proof of Theorem \ref{Th1}}
First of all, Proposition \ref{Abeliannilradical} yields that all Lie
algebras with Abelian nilradical do not admit a half-flat
$\SU(3)$-structure. As Table \ref{table_6d_5dNR} contains all
indecomposable Lie algebras with non-Abelian five-dimensional
nilradical according to the classification of Mubarakzyanov \cite{Mu}
and Shabanskaya \cite{Sha}, it suffices to prove existence or
non-existence in each case contained in the list. The existence
problem is completely solved by the explicit examples given in Table
\ref{examples6d_5dNR}. In the following, we prove the non-existence
for the remaining Lie algebras.
  
For all Lie algebras, except $\name_{6,39}^{-1,-1}$,
$\name_{6,41}^{-1}$ $\name_{6,76}^{-1}$, $\name_{6,78}$,
$\newname_{6,3}$ and $\newname_{6,4}^0$, we apply Proposition
\ref{obst_algebra1}.  In each case, we work in the basis
$(e^1,\ldots,e^6)$ of $\g^*$ given in Table \ref{table_6d_5dNR}.
Analogously to the proof of \cite[Proposition 3.3]{FS}, we show that
$\alpha=e^6$ is a one-form fulfilling equation (\ref{tildeJ:1}) for all
$\rho \in Z^3(\g)$ and all $\s \in Z^4(\g)$.  More precisely, we start
with a pair $(\rho,\sigma)\in \L^3 \g^*\times \L^4\g^*$ of a
three-form $\rho$ and a four-form $\sigma$ expressed with respect to
the induced basis on forms using 35 coefficients in total. The general
solution of the equations $d\rho=0$ and $d\omega=0$ can be obtained by
eliminating a certain amount of coefficients due to the separation of
the classes in Table \ref{table_6d_5dNR} by different Lie algebra
cohomology. The computation of $\tilde{J}_{\rho}$ with respect to the
given basis by equation (\ref{tildeJ:2}) allows us to verify equation
(\ref{tildeJ:1}) for $\alpha=e^6$ and all $(\rho,\s)\in Z^3(\g)\times
Z^4(\g)$. All calculations can be executed conveniently in a computer
algebra system.
  
Unfortunately, Proposition \ref{obst_algebra1} cannot be applied to
the Lie algebras $\name_{6,39}^{-1,-1}$, $\name_{6,41}^{-1}$
$\name_{6,76}^{-1}$, $\name_{6,78}$, $\newname_{6,3}$ and
$\newname_{6,4}^0$. The following proof uses Proposition
\ref{obst_algebra2}.  Again, we compute the general closed three-form
$\rho\in Z^3(\g)$ and the general closed four-form $\s\in Z^4(\g)$
with respect to the basis $(e^1,\ldots,e^6)$ given in Table
\ref{table_6d_5dNR}.  We choose $e^7\in (\g\op\bR)^*$ with $d e^7=0$
such that $(e^1,\ldots,e^7)$ is a basis of $(\g\op\bR)^*\cong \g^*\op
\bR^*$ and compute $\Omega(\rho,\s):=\rho\wedge e^7+\s\in \L^4 (\g\op
\bR)^*$.  Defining $X:=e_3$ and $W:=\spa{e_1,e_2,e_4,e_5,e_6,e_7}$, we
compute for each of the four Lie algebras the three-form
$\tilde{\rho}(\rho,\s)\in \L^3 W$,
$\tilde{\rho}(\rho,\s):=\left(X\hook\Omega(\rho,\s)\right)|_{W}$.
When we compute $\lambda(\tilde{\rho}(\rho,\s))$ with respect to $W$,
it turns out that it is in each case the square of a polynomial in the
coefficients of the general closed three-form $\rho\in Z^3(\g)$ and of
the general closed four-form $\s\in Z^4(\g)$.  Thus
$\lambda(\tilde{\rho})\geq 0$ and none of the four Lie algebras admits
a half-flat $\SU(3)$-structure according to Proposition
\ref{obst_algebra2}.
  
  \section{$(2,3)$-trivial Lie algebras of dimension six}\label{sec:23trivial}
An interesting application of Table \ref{table_6d_5dNR} is the classification of six-dimensional Lie algebras which are \emph{$(2,3)$-trivial}, i.e. whose second and third Lie algebra cohomology vanishes. These Lie algebras play an analogous role for the study of multi-moment maps as semi-simple Lie algebras do for the study of moment maps in symplectic geometry, see \cite{MS1} and \cite{MS2}.

A classification of $(2,3)$-trivial Lie algebras up to dimension five has been established by Madsen and Swann in \cite{MS1}. The most important tool for the classification is the following theorem proved in \cite{MS2}.
\begin{theorem}[Madsen, Swann]\label{th:madsenswann}
A Lie algebra $\g$ is $(2,3)$-trivial if and only if $\g$ is solvable, the derived Lie algebra $\mathfrak{n}=[\g,\g]$ is nilpotent of codimension one in $\g$ and $H^i(\mathfrak{n})^{\g}=\{0\}$ for $i=1,2,3$.
\end{theorem}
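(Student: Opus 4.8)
The plan is to prove both implications by exploiting the Hochschild--Serre spectral sequence of a codimension-one ideal, which, having a one-dimensional quotient, possesses only two nonzero columns and therefore degenerates at the $E_2$-page. Throughout I will use that the adjoint action of an ideal on its own cohomology is trivial, so that if $\mathfrak{a}\triangleleft\g$ with $\g/\mathfrak{a}$ one-dimensional, spanned by the class of some $\xi$, then the induced $\g$-action on $H^q(\mathfrak{a})$ is just the action of $\xi$, and the two-column degeneration yields the vector-space identity $\dim{H^n(\g)}=\dim{H^n(\mathfrak{a})^{\g}}+\dim{H^{n-1}(\mathfrak{a})_{\g}}$, where $H^{\bullet}(\mathfrak{a})_{\g}$ denotes the coinvariants. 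Since $\ker\xi$ and $\mathrm{coker}\,\xi$ have equal dimension on the finite-dimensional space $H^q(\mathfrak{a})$, one has $\dim{H^q(\mathfrak{a})^{\g}}=\dim{H^q(\mathfrak{a})_{\g}}$; this duality between invariants and coinvariants is the technical heart of the argument.

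For the implication that the three conditions imply $(2,3)$-triviality I would apply the above directly with $\mathfrak{a}=\mathfrak{n}=[\g,\g]$ and $\g/\mathfrak{n}\cong\bR$. The degeneration gives $\dim{H^2(\g)}=\dim{H^2(\mathfrak{n})^{\g}}+\dim{H^1(\mathfrak{n})_{\g}}$ and $\dim{H^3(\g)}=\dim{H^3(\mathfrak{n})^{\g}}+\dim{H^2(\mathfrak{n})_{\g}}$. The invariant terms vanish by hypothesis, and the coinvariant terms vanish by the invariant--coinvariant duality together with $H^1(\mathfrak{n})^{\g}=H^2(\mathfrak{n})^{\g}=0$. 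Hence $H^2(\g)=H^3(\g)=0$.

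For the converse I would first establish solvability. Writing the Levi decomposition $\g=\solv\rtimes\mathfrak{s}$ with $\mathfrak{s}$ semisimple, the quotient map $\pi\colon\g\to\g/\solv\cong\mathfrak{s}$ and the Levi inclusion $\iota\colon\mathfrak{s}\hookrightarrow\g$ satisfy $\pi\circ\iota=\id$, so $\pi^{*}$ is injective on cohomology; since $H^3(\mathfrak{s})\neq0$ for any nonzero semisimple $\mathfrak{s}$ (the Cartan--Killing three-form), $H^3(\g)=0$ forces $\mathfrak{s}=0$, that is, $\g$ is solvable, whence $\mathfrak{n}=[\g,\g]$ is nilpotent. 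For a nonzero solvable $\g$ one has $[\g,\g]\subsetneq\g$, so there is a nonzero closed one-form $\alpha\in H^1(\g)$; its kernel $\h:=\ker\alpha$ is a codimension-one ideal containing $\mathfrak{n}$. Applying the two-column degeneration to $\h\triangleleft\g$ in low degrees yields $\dim{H^1(\g)}=\dim{H^1(\h)^{\g}}+1$ and, from $H^2(\g)=0$, the vanishing of $H^1(\h)_{\g}$ and hence of $H^1(\h)^{\g}$; therefore $\dim{H^1(\g)}=1$, so $\mathfrak{n}=\h$ has codimension one. Finally, the same degeneration for $\mathfrak{n}\triangleleft\g$ shows that $H^2(\mathfrak{n})^{\g}$ and $H^3(\mathfrak{n})^{\g}$ occur as direct summands of $H^2(\g)$ and $H^3(\g)$ and hence vanish, while $H^1(\mathfrak{n})^{\g}=0$ was just obtained; this gives the third condition.

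The step I expect to be the main obstacle is pinning down the codimension of $\mathfrak{n}$: the inequality $\dim{H^1(\g)}\geq1$ is immediate, but the reverse bound $\dim{H^1(\g)}\leq1$ is what genuinely uses $H^2(\g)=0$, and it is not visible from cup products alone. The two-column spectral sequence applied to an arbitrary codimension-one ideal $\ker\alpha$ is precisely the device that converts $H^2(\g)=0$ into $H^1(\ker\alpha)^{\g}=0$ and thereby forces $\dim{H^1(\g)}=1$; getting this bookkeeping right, together with the precise identification of the $\g$-action on the relevant cohomology and the invariant--coinvariant duality, is the delicate part, whereas solvability and the nilpotency of $[\g,\g]$ are comparatively routine.
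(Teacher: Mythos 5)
The paper itself offers no proof of this theorem: it is quoted verbatim from Madsen and Swann \cite{MS2}, so there is no in-paper argument to measure your proposal against. Taken on its own, your proof is correct and is essentially the standard argument. The key identity $\dim{H^n(\g)}=\dim{H^n(\mathfrak{a})^{\g}}+\dim{H^{n-1}(\mathfrak{a})_{\g}}$ for a codimension-one ideal $\mathfrak{a}$ is exactly the two-column degeneration of Hochschild--Serre (equivalently, the hand computation with the splitting $\L^k\g^*=\L^k\mathfrak{a}^*\oplus\,\alpha\wedge\L^{k-1}\mathfrak{a}^*$), and your use of rank--nullity to identify $\dim{H^q(\mathfrak{a})^{\g}}$ with $\dim{H^q(\mathfrak{a})_{\g}}$ is the right way to convert the coinvariant terms into the stated invariant conditions. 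The forward implication, the Levi-decomposition argument via the Cartan three-form for solvability, and the derivation of $\dim{H^1(\g)}=1$ from $H^2(\g)=0$ applied to an arbitrary kernel $\ker\alpha$ are all sound. Two cosmetic remarks: the pieces $E_\infty^{0,n}$ and $E_\infty^{1,n-1}$ are subquotients of $H^n(\g)$ rather than canonical direct summands, which is harmless since you only count dimensions; and the theorem implicitly assumes $\g\neq\{0\}$, which your phrase ``for a nonzero solvable $\g$'' quietly supplies.
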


In particular, $(2,3)$-trivial Lie algebras are indecomposable. Thus, Theorem \ref{th:madsenswann}
implies the following result on $(2,3)$-trivial six-dimensional Lie algebras.
\begin{corollary}\label{cor:23trivial}
A six-dimensional Lie algebra $\g$ is $(2,3)$-trivial if and only if it is one of the Lie algebras in Table \ref{table_6d_5dNR} with $h^2(\g)=h^3(\g)=0$ or if the nilradical $\mathfrak n$ of $\g$ is $\bR^5$ and the induced endomorphism $\ad(v)\,|_{\L^i \mathfrak n}$ for an arbitrary $v\in \g\backslash \mathfrak n$ has trivial kernel for $i=1,2,3$.
\end{corollary}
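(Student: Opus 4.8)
The plan is to deduce both implications from the Madsen--Swann criterion (Theorem \ref{th:madsenswann}), which says that $\g$ is $(2,3)$-trivial if and only if it is solvable, $\n:=[\g,\g]$ is nilpotent of codimension one, and $H^i(\n)^\g=0$ for $i=1,2,3$. For the forward direction I would first pin down the nilradical. Since $[\g,\g]\subseteq\nil{\g}$ always and $[\g,\g]$ has codimension one, and hence dimension five, the nilradical is either $[\g,\g]$ or all of $\g$; but a six-dimensional nilpotent Lie algebra cannot have a codimension-one derived algebra, because a nilpotent Lie algebra with one-dimensional abelianisation is generated by a single element and is therefore Abelian of dimension one. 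Thus $\g$ is not nilpotent and $\nil{\g}=[\g,\g]=\n$ is five-dimensional, which separates the argument according to whether $\n$ is Abelian.

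If $\n$ is non-Abelian, then $\g$ is indecomposable (as $(2,3)$-trivial algebras are, by the remark preceding the corollary) and therefore appears in Table \ref{table_6d_5dNR}. Here both directions are immediate: by definition $(2,3)$-triviality means $H^2(\g)=H^3(\g)=0$, that is $h^2(\g)=h^3(\g)=0$, so $\g$ is $(2,3)$-trivial exactly when it occurs in the table with these two numbers vanishing.

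The substantive step, which I expect to be the main obstacle, is the Abelian case $\n\cong\bR^5$, where I would translate the invariant-cohomology conditions $H^i(\n)^\g=0$ into the stated kernel conditions. Since $\n$ is Abelian its Chevalley--Eilenberg differential vanishes, so $H^i(\n)=\L^i\n^*$, and every element of $\n$ acts trivially on $\L^i\n^*$; hence the $\g$-module $H^i(\n)$ is governed entirely by the Lie derivative $\mathcal L_v$, which on $\L^i\n^*$ is the derivation induced by $-(\ad(v)|_\n)^*$ and is therefore the negative transpose of the induced endomorphism $\ad(v)|_{\L^i\n}$. As a linear endomorphism and its transpose have equal nullity, $H^i(\n)^\g=\ker(\mathcal L_v|_{\L^i\n^*})$ vanishes precisely when $\ad(v)|_{\L^i\n}$ has trivial kernel, which settles the forward implication. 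For the converse I would instead start from the kernel hypotheses and verify the Madsen--Swann conditions directly: the case $i=1$ makes $\ad(v)|_\n$ bijective, which together with $[\n,\n]=0$ forces $[\g,\g]=\ad(v)(\n)=\n$, so $\n$ is a nilpotent derived algebra of codimension one, $\g$ is solvable, and the remaining conditions follow from the equivalence just established. Finally I would check that the condition does not depend on $v\in\g\setminus\n$: replacing $v$ by $\lambda v+n$ with $\lambda\neq0$ and $n\in\n$ only rescales $\ad(v)|_\n$ by $\lambda$, since $\ad(n)|_\n=0$, and hence leaves all the kernels unchanged.
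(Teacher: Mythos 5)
Your proof is correct and follows the same route as the paper, which derives the corollary directly from the Madsen--Swann criterion (Theorem \ref{th:madsenswann}) without spelling out any details. Your elaboration --- identifying $\nil{\g}$ with $[\g,\g]$ via the one-dimensional-abelianisation argument, splitting into the Abelian and non-Abelian nilradical cases, and translating $H^i(\mathfrak n)^{\g}=0$ into the stated kernel condition through the transpose of the induced derivation on $\L^i\mathfrak n$ --- correctly supplies exactly the steps the paper leaves implicit.
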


\section*{Appendix}
Table \ref{table_6dnonsolv} contains all non-solvable indecomposable
six-dimensional Lie algebras and Table \ref{table_6d_5dNR} contains
all indecomposable six-dimensional Lie algebras with five-dimensional,
non-Abelian nilradical. Table \ref{table_6d_5dNR} is further
subdivided by the different non-Abelian nilradicals which appear.

The notation and the Lie brackets in Table \ref{table_6dnonsolv} are
taken literally from \cite{Tu}. Table \ref{table_6d_5dNR} is based on
the original list by Mubarakzyanov \cite{Mu} and, apart from the
obvious subdivision according to the number of free parameters and the
Lie algebra cohomology, the list is modified as follows. On the one
hand, some of Mubarakzyanov's classes $g_{6,n}$ are redundant since
there is an isomorphism to one of the other classes for certain
parameter values. On the other hand, Shabanskaya \cite{Sha} found 6
new classes which are fitted in Table \ref{table_6dnonsolv} according
to their nilradical and denoted by $\newname_{6,i}$, $i=1,\dots,6$.
Moreover, a large number of isomorphisms for certain parameter values
has been discovered by Shabanskaya \cite{Sha} and the authors
resulting in a range restriction or vanishing of certain
parameters. It turns out to be hard to assure that no further
isomorphisms are possible due to the complexity and large amount of
data. Lastly, a few parameter values are excluded because the corresponding
Lie algebra is decomposable or nilpotent. Note that the reason for
excluding parameter values is usually obvious when considering the
matrix representing $\mathrm{ad}_{\e_6}$ whereas non-obvious
modifications are explained in footnotes. The names of the classes are
modified such that the remaining parameters are written as exponents
of the class symbol $\name$ and are denoted by $\a$, $\b$, $\c$ if
continuous and by $\ve$ if discrete.

The Lie brackets in both tables are written in the well-known dual
notation. Thereby, $\e^1,\dots, \e^6$ is a basis of $\g^*$ and the
images of $\e^i$ for $i=1,\ldots,6$ under the exterior derivative $d$
are given with rising $\mathrm i$ from left to right. We use the
abbreviation $\e^{ij}$ for $\e^i \wedge \e^j$. In the column labelled
$\mathfrak z$ the dimension of the centre of the corresponding Lie
algebra is given. The column labelled $\mathrm{h}^*(\g)$ contains the
vector $(\mathrm{h}^1(\g),\dots, \mathrm{h}^6(\g))$ of the dimensions
of the Lie algebra cohomology groups, where $\mathrm{h}^0(\g)$ is
omitted since it always equals one. Notice that unimodular Lie
algebras are characterised by the non-vanishing of the top-dimensional
cohomology group. In order to emphasise the unimodular entries in the
lists, the non-zero $\mathrm{h}^6(\g)$ are written bold and
underlined. The last column, labelled half-flat, is checked if and
only if the Lie algebra under consideration admits a half-flat
$\SU(3)$-structure. Note that all Lie algebras in Table
\ref{table_6dnonsolv} admit half-flat $\SU(3)$-structures.

Table \ref{examples_6dnonsolv} contains one example $(\omega,\rho)\in
\L^2 \g^*\times \L^3 \g^*$ of a normalised half-flat
$\SU(3)$-structure for each non-solvable indecomposable
six-dimensional Lie algebra. Similarly, Table \ref{examples6d_5dNR}
contains one example $(\omega,\rho)\in \L^2 \g^*\times \L^3 \g^*$ of a
normalised half-flat structure for each indecomposable six-dimensional
Lie algebra with five-dimensional nilradical which admits such a
structure. These structures are given in both cases in the
corresponding basis $(e^1,\ldots,e^6)$ of $\g^*$ of Table
\ref{table_6dnonsolv} or \ref{table_6d_5dNR}, respectively.  Moreover,
the Euclidean metric induced by these forms on $\g$ is added. The
label ONB indicates that the considered basis is
orthonormal. Similarly, OB indicates that the considered basis is
orthogonal. In this case, the norms of the non-unit basis vectors are
given explicitly.

Note that the collection of invariants in Table \ref{table_6d_5dNR} is
complemented by \cite{CS} where the invariants of the coadjoint
representation for the Lie algebras in question are determined.

\clearpage

\footnotesize
\renewcommand{\arraystretch}{1.5} \setlength{\tabcolsep}{0.1cm}
\setlength{\LTcapwidth}{16cm}


\end{document}